\documentclass[11pt]{article}
\usepackage{amsmath,amsthm}
\usepackage[top = 1in, bottom = 1in, left = .75in, right =.75in]{geometry}
\usepackage[usenames,dvipsnames]{xcolor}
\usepackage{pgf,tikz}
\usepackage{tkz-graph}

\theoremstyle{plain}
\newtheorem{theorem}{\bf Theorem}

\theoremstyle{definition}
\newtheorem*{definition}{Definition}

\theoremstyle{remark}

\author{H\"{u}seyin Acan\\ School of Mathematical Sciences\\ Monash University\\ Melbourne, VIC 3800\\ Australia\\ 
\texttt{huseyin.acan@monash.edu}
}
\title{Forests of chord diagrams with a given number of trees}
\date{}

\begin{document}

\maketitle

\begin{abstract}
In this short note we find the number of forests of chord diagrams with a given number of trees and a given number of chords.

\vspace{.5cm}
\noindent{\bf Keywords:} chord diagram, tree, forest, forest chord diagram.

\noindent{\bf 2010 AMS Subject Classification:} Primary: 05A15; Secondary: 05A10
\end{abstract}

A \emph{chord diagram} of size $n$ is a pairing of $2n$ points. One can take $2n$ points on a circle, label them with the numbers $1,\dots,2n$ in clockwise order, and join the two points in a pair with a chord for a pictorial representation. Given a chord diagram $\mathcal{C}$, the intersection graph $G_{\cal C}$ is defined as follows: the vertices of $G_{\cal C}$ correspond to the chords of $\cal C$ and two vertices in $G_{\cal C}$ are adjacent if and only if the corresponding chords cross each other in $\cal C$; see Figure~\ref{fig: CD and graph}. When $G_{\cal C}$ is a tree or forest, we call $\cal C$ a {\em tree chord diagram} or a {\em forest chord diagram}, respectively. 

\begin{figure}[h]
\centering
\begin{tabular} {c c}
\begin{tikzpicture}[scale = .8]

\coordinate (center) at (0,0);
\def\radius{1.2cm}
\draw (center) circle[radius=\radius];
\begin{scriptsize}
\foreach \x in {1,...,10}
{
\path (center) ++(180+0.1*180-\x*0.2*180:\radius) coordinate (A\x);
\fill (A\x) circle[radius=2pt] ++(180+0.1*180-\x*0.2*180:.7em) node {\x};
}
\foreach \from/\to in {A1/A8, A2/A9, A3/A5, A7/A10, A4/A6}
  \draw (\from) -- (\to);
\end{scriptsize}
\end{tikzpicture}

\hspace{2cm}

\begin{tikzpicture}
 [scale=1,auto=left,every node/.style={ inner sep=3pt}, every loop/.style={min distance=20mm}]
\coordinate (n1) at (0,0) {};
\coordinate (n2) at (1,1) {};
\coordinate (n3)  at (2,0) {};
\coordinate (n4)   at (3.5,1) {};
\coordinate (n5) at (3.5,0) {};

\fill (n1) circle[radius=3pt] node[below] {$(1,8)$};
\fill (n2) circle[radius=3pt] node[above] {$(2,9)$};
\fill (n3) circle[radius=3pt] node[below] {$(7,10)$};
\fill (n4) circle[radius=3pt] node[above] {$(3,5)$};
\fill (n5) circle[radius=3pt] node[below] {$(4,6)$};

\foreach \from/\to in {n1/n2, n2/n3, n1/n3, n4/n5}
  \draw (\from) -- (\to);
\end{tikzpicture}

\end{tabular}
\caption{A chord diagram with five chords and the corresponding intersection graph.}
\label{fig: CD and graph}
\end{figure}
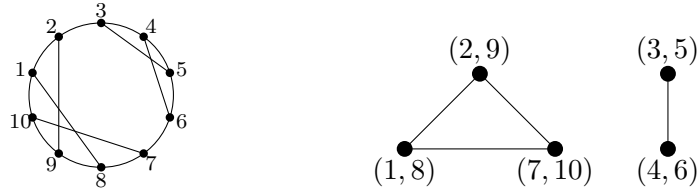

Dulucq and Penaud~\cite{DP} found the number of tree chord diagrams with $n$ chords as ${3n-3 \choose n-1}/(2n-1)$ by showing a bijection between tree chord diagrams with $n$ chords and ternary plane trees with $n-1$ internal vertices. 
In this note we find $f(n,m)$, the number of forests of chord diagrams consisting of $n$ chords and $m$ trees. We also find $r(n,m)$, the number of forest chord diagrams consisting of $n$ chords and $m$ rooted trees. In particular, we prove the following two theorems.

\begin{theorem}\label{main}
For positive integers $n$ and $m$ such that $m\le n$, we have
\begin{displaymath}
f(n,m)= 
\begin{cases}
\frac{1}{n-m}\cdot{2n \choose m-1}\cdot {3n-2m-1 \choose n-m-1}, & \text{if } 1\le m \le n-1\,;   \\
\frac{1}{n+1}\cdot{2n \choose n}, & \text{if } n=m\,.
\end{cases}
\end{displaymath}
\end{theorem}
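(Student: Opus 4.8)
The plan is to pass to generating functions, reduce everything to the known series $T(x)=\sum_{n\ge1}t_n x^n$ for tree chord diagrams (where $t_n$ is the Dulucq--Penaud count), and then extract coefficients by Lagrange inversion. Write $F(x,u)=\sum_{n,m}f(n,m)x^n u^m$, with $x$ marking chords, $u$ marking tree components, and the empty diagram contributing the constant $1$. The structural heart of the argument is the identity
\[ F(x,u)=1+u\,T\!\left(xF(x,u)^2\right). \]
To prove it I would take a forest chord diagram and let $K$ be the connected component of its intersection graph containing the chord through point $1$. Since $K$ is a maximal mutually-crossing set of chords, no chord outside $K$ crosses a chord of $K$; one checks that the $2j$ points of a $j$-chord component $K$ cut the circle into $2j$ arcs and that every remaining chord lies entirely inside a single arc, because a chord joining two different arcs would necessarily cross a chord of $K$. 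Conversely, choosing any $j$-chord tree chord diagram for $K$ and filling each of its $2j$ arcs with an arbitrary forest chord diagram reconstructs the object, and the resulting intersection graph is the disjoint union of $G_K$ with the intersection graphs of the inserted pieces. Demanding that the whole graph be a forest forces $K$ to be a tree (the factor $u$ and the series $T$) and each inserted diagram to be a forest (a factor $F$ per arc, hence $F^{2j}$); summing over $j\ge1$ yields the displayed equation.

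Next I would solve by Lagrange inversion. Setting $H=xF^2$ turns the identity into $F=1+uT(H)$ together with $H=x\bigl(1+uT(H)\bigr)^2=x\,\psi(H)$ with $\psi(H)=(1+uT(H))^2$. Applying Lagrange inversion to $\Phi(H)=1+uT(H)$ gives
\[ [x^n]F=\tfrac1n[H^{n-1}]\,uT'(H)\,(1+uT(H))^{2n}. \]
Extracting $[u^m]$ by the binomial theorem produces the factor $\binom{2n}{m-1}$, and then the identities $T'(H)T(H)^{m-1}=\tfrac1m\frac{d}{dH}T(H)^m$ and $[H^{n-1}]g'(H)=n[H^n]g(H)$ collapse the expression to
\[ f(n,m)=\frac1m\binom{2n}{m-1}\,[H^n]T(H)^m. \]

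It remains to evaluate $[H^n]T(H)^m$. By the Dulucq--Penaud bijection one has $T(x)=xB(x)$, where $B=1+xB^3$ is the generating function of ternary plane trees counted by internal vertices, so $T(H)^m=H^mB(H)^m$ and $[H^n]T^m=[H^{n-m}]B^m$. The Fuss--Catalan power formula, itself a one-line Lagrange inversion since $B-1=H\bigl(1+(B-1)\bigr)^3$, gives $[H^{k}]B^m=\frac{m}{k}\binom{3k+m-1}{k-1}$. Taking $k=n-m\ge1$ yields exactly $f(n,m)=\frac{1}{n-m}\binom{2n}{m-1}\binom{3n-2m-1}{n-m-1}$, while for $m=n$ one has $[H^0]B^n=1$, whence $f(n,n)=\frac1n\binom{2n}{n-1}=\frac{1}{n+1}\binom{2n}{n}$; these are precisely the two cases of the theorem.

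The main obstacle is the structural identity of the first step: all of the subsequent work is standard Lagrange inversion and bookkeeping, but it rests entirely on the claim that, after deleting the component $K$ of the chord through point $1$, the remaining chords split into independent diagrams, one per arc cut out by $K$. Verifying carefully that no leftover chord can cross $K$ and that none can bridge two distinct arcs---equivalently, that the intersection graph genuinely decomposes as the stated disjoint union and that the correspondence is a bijection---is the crux of the proof.
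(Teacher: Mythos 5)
Your argument is correct, and its first half is genuinely different from the paper's. The paper gets to the key intermediate identity $f(n,m)=\frac{1}{m}\binom{2n}{m-1}[x^n]T(x)^m$ by summing $t_{|B_1|/2}\cdots t_{|B_m|/2}$ over non-crossing partitions of $[2n]$ into $m$ even blocks and invoking Kreweras' enumeration of non-crossing partitions by type; you reach the same identity via the symbolic decomposition $F(x,u)=1+u\,T(xF^2)$ obtained by removing the component of the chord through point $1$, followed by Lagrange inversion in $x$. These are two faces of the same combinatorial fact --- the paper also asserts without proof that component supports form a non-crossing partition, which is exactly the structural lemma you flag as your crux (and your sketch of it is right: a chord bridging two arcs of $\mathrm{supp}(K)$ would separate $\mathrm{supp}(K)$ into two nonempty sides, and connectedness of $K$ forces some chord of $K$ to straddle the separation, hence a crossing). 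Your route avoids Kreweras' theorem entirely and delivers the bivariate generating function as a bonus, at the cost of a second application of Lagrange inversion; the paper's route is shorter on the page because Kreweras does the heavy lifting. The final coefficient extraction is essentially identical in both: the paper expands $(z+1)^m$ with $z=T/x-1$ and sums a Vandermonde convolution, while you apply the Fuss--Catalan power formula $[H^k]B^m=\frac{m}{k}\binom{3k+m-1}{k-1}$ to $B=1+HB^3$ in one step, which is the cleaner packaging of the same Lagrange inversion. One small slip to fix: $K$ is a maximal \emph{connected} set of chords under the crossing relation (a component of the intersection graph), not a ``maximal mutually-crossing set''; the conclusion you draw from it --- that no outside chord crosses a chord of $K$ --- is exactly maximality of the connected component, so nothing downstream is affected, but the phrase as written describes a maximal clique, for which the conclusion would be false.
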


\begin{theorem}\label{thm2}
For positive integers $n$ and $m$ such that $m\le n$, we have
\[
r(n,m) =  \frac{1}{m} {2n \choose m-1} \big( \Sigma_1+ \Sigma_2\big),
\]
where
\[ 
\Sigma_1=\sum_{k=0}^m \sum_{j=0}^{n-m-1} (-1)^k{m \choose k} {m+j-1\choose j}2^{m-k} 3^j\frac{2j+k}{n-m-j} {3n-3m+k-j-1 \choose n-m-j-1} 
\]
and
\[
\Sigma_2= \sum_{k=0}^m (-1)^k {m \choose k} {n-1 \choose n-m}  2^{m-k}3^{n-m}. 
\]
\end{theorem}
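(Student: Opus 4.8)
The plan is to reduce everything to a coefficient extraction on the ternary generating function and then to reuse the combinatorial decomposition that proves Theorem~\ref{main}. By the Dulucq--Penaud bijection~\cite{DP}, tree chord diagrams with $s$ chords are counted by the Fuss--Catalan number $C_{s-1}$, where $C_k=\frac1{2k+1}\binom{3k}{k}$ and $B(x)=\sum_{k\ge0}C_kx^k$ satisfies $B=1+xB^3$; hence the generating function of tree chord diagrams by number of chords is $xB(x)$. Rooting a tree means distinguishing one of its $s$ chords, which multiplies the weight of an $s$-chord tree by $s$ and therefore replaces $xB(x)$ by $D(x):=\frac{d}{dx}\big(xB(x)\big)=B+xB'$. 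I would then run the decomposition behind Theorem~\ref{main} --- $m$ pairwise non-crossing connected chord diagrams placed around the circle, contributing a placement factor $\binom{2n}{m-1}$ and a cyclic factor $\frac1m$ --- but with each component now rooted, to obtain
\begin{equation*}
r(n,m)=\frac1m\binom{2n}{m-1}\,[x^{\,n-m}]\,D(x)^m.
\end{equation*}
Proving this identity is the crux and the step I expect to be the main obstacle: I must verify that marking a chord inside each component is independent of the interleaving of the components, so that the factor $\frac1m\binom{2n}{m-1}$ is the same as in the unrooted count and only the per-component weight changes from $xB$ to $xD$.

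Granting the reduction, the rest is generating-function algebra. Differentiating $B=1+xB^3$ gives $xB'=\frac{B(B-1)}{3-2B}$ (using $xB^3=B-1$), so
\begin{equation*}
D=\frac{B(2-B)}{3-2B},\qquad D^m=\frac{(2-B)^m}{(1-3xB^2)^m},
\end{equation*}
where the second equality uses $1-3xB^2=\frac{3-2B}{B}$. Expanding $(2-B)^m=\sum_{k=0}^m\binom mk(-1)^k2^{m-k}B^k$ then writes $D^m$ as $\sum_{k}(-1)^k\binom mk2^{m-k}B^k(1-3xB^2)^{-m}$.

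To finish, set $\ell:=n-m$ and expand $(1-3xB^2)^{-m}=\sum_{j\ge0}\binom{m+j-1}{j}3^jx^jB^{2j}$, so that
\begin{equation*}
[x^{\ell}]\frac{B^k}{(1-3xB^2)^m}=\sum_{j=0}^{\ell}\binom{m+j-1}{j}3^j\,[x^{\ell-j}]B^{\,k+2j}.
\end{equation*}
Lagrange inversion applied to $B=1+xB^3$ gives $[x^i]B^p=\frac{p}{3i+p}\binom{3i+p}{i}$ for $i\ge1$ and $[x^0]B^p=1$. The terms with $0\le j\le\ell-1$ use the Lagrange value, which after the rewriting $\frac{k+2j}{3(\ell-j)+k+2j}\binom{3(\ell-j)+k+2j}{\ell-j}=\frac{2j+k}{\ell-j}\binom{3\ell+k-j-1}{\ell-j-1}$ assemble precisely into $\Sigma_1$. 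The single boundary term $j=\ell$ contributes $[x^0]B^{k+2\ell}=1$, so summed over $k$ these boundary terms equal $\sum_{k=0}^m(-1)^k\binom mk2^{m-k}\binom{m+\ell-1}{\ell}3^{\ell}$, which is exactly $\Sigma_2$ (as $\binom{m+\ell-1}{\ell}=\binom{n-1}{n-m}$ and $3^\ell=3^{n-m}$). Adding the two pieces and restoring the prefactor $\frac1m\binom{2n}{m-1}$ yields the stated formula; as a sanity check, when $n=m$ we have $\ell=0$, so $\Sigma_1$ is empty, $\Sigma_2=1$, and $r(n,n)=\frac1n\binom{2n}{n-1}=\frac1{n+1}\binom{2n}{n}$, consistent with Theorem~\ref{main}.
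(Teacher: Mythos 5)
Your proposal is correct and follows essentially the same route as the paper: reduce to $r(n,m)=\frac1m\binom{2n}{m-1}[x^{n-m}]D(x)^m$ (the paper works with $R(x)=xD(x)=xT'(x)$ and extracts $[x^n]R(x)^m$, which is the same thing), then use the ternary equation and Lagrange inversion exactly as you do; all of your algebra checks out against the paper's. The step you flag as the main obstacle is in fact immediate from the decomposition already used for Theorem~\ref{main}: the paper sums over non-crossing partitions of $[2n]$ into blocks of even size using Kreweras' formula, and since the number of tree chord diagrams on a block depends only on the block's size, rooting simply replaces the per-block weight $t_{|B_j|/2}$ by $(|B_j|/2)\,t_{|B_j|/2}$ without affecting the derivation of the prefactor $\frac1m\binom{2n}{m-1}$.
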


The generating function $G(x)$ of ternary trees satisfies the equation $G(x)=1+xG(x)^3$. For a positive integer $k$, let $t_k$ denote  the number of tree chord diagrams with $k$ chords and let $T(x)=\sum_{k\ge 1} t_kx^k$. By the bijection shown by Dulucq and Penaud~\cite{DP}, we have $T(x)=xG(x)$.
Hence, using the above relation for $G(x)$, we get
\begin{equation}\label{tree-gen}
\frac{T(x)}{x}=1+x\left( \frac{T(x)}{x}\right)^3.
\end{equation}

Before we start our proofs, we define non-crossing partitions and cite a result about such partitions with given parameters.

\begin{definition}
Two blocks of a partition of the set $[m]:=\{1,\dots,m\}$ cross each other if one block contains $\{a,c\}$ and the other block contains $\{b,d\}$ where $a<b<c<d$. A partition of $[m]$ is called a \textit{non-crossing partition} if no two blocks cross each other.
\end{definition}

The relevance of this definition is that the components of a chord diagram with $n$ chords determine a non-crossing partition of $[2n]$. (The components of a chord diagram $\mathcal{C}$ correspond to the components of $G_{\cal C}$.)  More precisely, each block of the partition consists of the vertices of the chords in a component. 

In order to count the forests with $m$ trees, we will consider non-crossing partitions of $[2n]$, and for each such partition we will count the trees in each block. The following theorem about the number of non-crossing partitions was proved by Kreweras~\cite{Kreweras}. (See Stanley~\cite[Exercise 5.35]{Stanley2} for a bijective proof.)  

\begin{theorem}[Kreweras]\label{non-crossing}
The number of non-crossing partitions of $[n]$ with $s_j$ blocks of size $j$ is $\frac{(n)_{k-1}}{s_1!\cdots s_n!}$, where $k=s_1+\cdots+s_n$ and $(n)_{k-1}= n(n-1)\cdots(n-k+2)$. 
\end{theorem}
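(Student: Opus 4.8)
\section*{Proof proposal for Theorem~\ref{non-crossing}}

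The plan is to prove Kreweras' formula by a generating-function argument built on a structural decomposition of noncrossing partitions, followed by Lagrange inversion. I introduce a formal variable $z$ marking the size of the ground set and, for each $j\ge 1$, a variable $x_j$ marking a block of size $j$, and let $F=F(z;x_1,x_2,\dots)$ be the generating function of all noncrossing partitions (including the empty one, counted by $1$), so that the coefficient of $z^n\prod_j x_j^{s_j}$ in $F$ is exactly the quantity to be computed.

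The key step is to derive the functional equation $F=1+\sum_{r\ge1}x_r(zF)^r$. First I would fix a nonempty noncrossing partition $\pi$ of $[n]$ and look at the block $B=\{1=b_1<b_2<\dots<b_r\}$ containing the element $1$. This block cuts $[n]\setminus B$ into the $r$ intervals $I_t=\{b_t+1,\dots,b_{t+1}-1\}$ for $t<r$ together with $I_r=\{b_r+1,\dots,n\}$. I claim that every block other than $B$ lies entirely inside a single interval $I_t$: if a block $C$ contained $p\in I_s$ and $q\in I_t$ with $s<t$, then the chain $b_s<p<b_{s+1}\le b_t<q$ would exhibit the forbidden pattern with $\{b_s,b_{s+1}\}\subseteq B$ and $\{p,q\}\subseteq C$, contradicting noncrossingness. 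Since each $I_t$ is an interval disjoint from $B$, a block contained in $I_t$ cannot cross $B$ either. Hence $\pi$ restricts to an independent arbitrary noncrossing partition on each $I_t$, which translates into the stated equation: the block $B$ contributes $x_r z^r$ and the $r$ gaps contribute $F^r$. Writing $\phi(w)=1+\sum_{r\ge1}x_rw^r$, this is precisely $F=\phi(zF)$.

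With the functional equation in hand, the extraction is routine. Setting $y=zF$ gives $y=z\phi(y)$ with $\phi(0)=1\ne0$, so Lagrange inversion applies; since $[z^n]F=[z^{n+1}]y$, I obtain
\[
[z^n]F=\frac{1}{n+1}\,[w^n]\,\phi(w)^{n+1}.
\]
Expanding $\phi(w)^{n+1}=\bigl(1+\sum_{r\ge1}x_rw^r\bigr)^{n+1}$ by the multinomial theorem, the monomial $\prod_j x_j^{s_j}$ (which forces the $w$-power to be $\sum_j j\,s_j=n$, as required) arises by selecting $s_j$ factors of type $x_jw^j$ and leaving $n+1-k$ factors equal to $1$, where $k=\sum_j s_j$. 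This contributes the coefficient $(n+1)!/\bigl((n+1-k)!\prod_j s_j!\bigr)$, and dividing by $n+1$ yields $n!/\bigl((n-k+1)!\prod_j s_j!\bigr)=(n)_{k-1}/\prod_j s_j!$, which is Kreweras' formula.

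The main obstacle is the structural claim underlying the functional equation, namely that deleting the block of $1$ leaves independent noncrossing partitions on the gaps with no residual crossings; everything else is a direct application of Lagrange inversion. One should only note that, although infinitely many variables $x_j$ appear, for fixed $n$ only finitely many are relevant, so all manipulations are legitimate operations on formal power series.
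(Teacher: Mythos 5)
The paper does not actually prove Theorem~\ref{non-crossing}: it is quoted from Kreweras, with a pointer to Stanley for a bijective proof. So there is no in-paper argument to compare against, and your proof stands as a self-contained addition. It is correct. The first-block decomposition is sound: your crossing argument (the chain $b_s<p<b_{s+1}\le b_t<q$ with $\{b_s,b_{s+1}\}\subseteq B$ and $\{p,q\}\subseteq C$) matches the paper's definition of crossing exactly, and the converse direction --- that gluing $B$ to arbitrary noncrossing partitions of the gaps produces a noncrossing partition --- holds because blocks inside a gap cannot straddle an element of $B$ and blocks in distinct gaps are separated (you might state this second half of the bijection a touch more explicitly, but it is immediate). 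The functional equation $F=\phi(zF)$, the substitution $y=zF$, the Lagrange inversion $[z^n]F=\frac{1}{n+1}[w^n]\phi(w)^{n+1}$, and the multinomial extraction giving $\frac{n!}{(n+1-k)!\prod_j s_j!}=(n)_{k-1}/\prod_j s_j!$ are all correct, and your remark that only finitely many $x_j$ matter for fixed $n$ disposes of the formal-series issue. Your route is arguably the more natural one in the context of this paper, which already relies on Lagrange inversion for the coefficient extractions in Theorems~\ref{main} and~\ref{thm2}; the cited alternative in Stanley is a bijection, which gives more combinatorial insight but is not needed here.
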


\begin{proof}[{\bf Proof of Theorem~\ref{main}}]
For $n=m$, we have $n$ non-crossing chords, and the number of such configurations is given by the $n$th Catalan number, which is $(2n)!/(n!(n+1)!)$. Now assume $1\le m<n$.

For a component $H$ of a chord diagram, we define the \textit{support} of $H$ as the set of endpoints of its chords. 
Let $\{B_1,\dots, B_m\}$ be the set of blocks in a non-crossing partition of $[2n]$, where each block has an even cardinality.  The number of forest chord diagrams such that each $B_j$ is a tree is $t_{|B_1|/2}\times \cdots \times t_{|B_m|/2}$. In order to find the number of forests with $m$ trees, we need to take a sum over all non-crossing partitions of $[2n]$ with $m$ blocks of  even cardinality. 

We say that a non-crossing partition of $[2n]$ has type $(s_1,\dots,s_n)$ when there are $s_i$ blocks of size $2i$.  In that case we have $\sum_{i=1}^n is_i=2n$. By Theorem \ref{non-crossing}, the number of non-crossing partitions of $[2n]$ of type $(s_1,\cdots,s_n)$ is $(2n)!/(R!s_1!\cdots s_n!)$, where $R= 2n+1-(s_1+\cdots+s_n)$.
Hence,
\begin{equation}
\label{eq: forestsum1}	f(n,m)= \sum_{s_1,\dots,s_n} t_1^{s_1}	\cdots t_n^{s_n}\frac{(2n)!}{(2n+1-m)!s_1!\cdots s_n!}\,,	
\end{equation}
where the sum is over all nonnegative integers $s_1,\dots,s_n$ such that 
\begin{equation}\label{constraint}	
\sum_{i=1}^n s_i=m  \text{ and } \sum_{i=1}^n is_i=n .			
\end{equation}
We write \eqref{eq: forestsum1} as 
\begin{align}	
f(n,m)	&= \frac{(2n)!}{(2n+1-m)!}\sum_{s_1,\dots,s_n \atop \text{ meeting~\eqref{constraint} }} \frac{t_1^{s_1}}{s_1!}\cdots \frac{t_n^{s_n}}{s_n!}   \notag  \\
		&=   \frac{(2n)!}{(2n+1-m)!} [x^n] \frac{T(x)^m}{m!}	\notag  \\
		&= \frac{1}{m} {2n \choose m-1} [x^n] T(x)^m.
\end{align}
Finally, we need to find the coefficient of $x^n$ in the series $T(x)^m$. By Eq.~\eqref{tree-gen}, we have $z=x(z+1)^3$, where $z=z(x)=\frac{T(x)}{x}-1$. Hence,
\begin{align}\label{eq: tree coefficient}
[x^n]T(x)^m &= [x^{n-m}]\left( \frac{T(x)}{x}\right)^m 
= [x^{n-m}] (z+1)^m 	\notag	\\
&= \sum_{k=0}^{m} {m \choose k} [x^{n-m}] z^k 
= \sum_{k=0}^m { m \choose k} \frac{k}{n-m}[x^{n-m-k}](x+1)^{3(n-m)}, 
\end{align}
where the last equality follows from the Lagrange inversion formula. Thus,
\begin{align*}
f(n,m)		&= \frac{1}{m}{2n \choose m-1}\sum_{k=0}^m {m \choose k}  \frac{k}{n-m}{3(n-m) \choose n-m-k}\\
&= \frac{1}{n-m}{2n \choose m-1}\sum_{k=1}^m {m-1 \choose k-1} {3(n-m) \choose n-m-1-(k-1)} \\
&=  \frac{1}{n-m}{2n \choose m-1}{3n-2m-1\choose n-m-1}		
\end{align*}
as desired.
\end{proof}

\begin{proof}[{\bf Proof of Theorem~\ref{thm2}}]
Analogous to Eq.~\eqref{eq: forestsum1}, we have
\begin{equation}  \label{eq: r(n,m)1}
r(n,m)= \sum_{s_1,\dots,s_n} (1\cdot t_1)^{s_1}(2\cdot t_2)^{s_2} \cdots (n\cdot t_n)^{s_n}  \frac{(2n)!} {(2n+1-m)!s_1!\cdots s_n!},
\end{equation}
since in this case we need to choose a root for each of the trees in a forest. The sum is over all nonnegative integer tuples $(s_1,\dots,s_n)$ satisfying~\eqref{constraint}.
We rewrite~\eqref{eq: r(n,m)1} as
\begin{equation}  \label{eq: r(n,m)2}
r(n,m)= \frac{(2n)!}{(2n+1-m)!} \sum_{s_1,\dots,s_n} \frac{(1\cdot t_1)^{s_1}}{s_1!}\cdots \frac{(n\cdot t_n)^{s_n}}{s_n!}. 
\end{equation}
Let $B(n,m)$ be this sum  without the coefficient $(2n)!/(2n+1-m)!$, i.e.,
\[	 
B(n,m)= \sum_{s_1,\dots,s_n} \frac{(1\cdot t_1)^{s_1}}{s_1!}\cdots \frac{(n\cdot t_n)^{s_n}}{s_n!}.			
\]
We have
\begin{align*}
\sum_{n,m \ge 1}B(n,m)x^my^n  
&= \sum_{n,m}\sum_{s_1,\dots,s_n} \frac{(1\cdot t_1)^{s_1}}{s_1!}\cdots \frac{(n\cdot t_n)^{s_n}}{s_n!} x^my^n	\\
&=\prod_{j\ge 1} \sum_{s\ge 0} \frac{(jt_jxy^j)^s}{s!} 		\\
&= \exp \left( x\sum_{j\ge 1} jt_jy^j\right)  = e^{xR(y)},
\end{align*}
where $R(y)=\sum_{n\ge 1} nt_ny^n$. Thus, $B(n,m)$ is the coefficient of $x^my^n$ in the function $e^{xR(y)}$, which is the same as the coefficient of $y^n$ in $R(y)^m/m!$. 
Hence, using~\eqref{eq: r(n,m)2}
\begin{equation} 	\label{eq: r(n,m)}
r(n,m) = \frac{1}{m} {2n \choose m-1} [x^n]R(x)^m.	
\end{equation}
Finally, we need to find the coefficient of $x^n$ in $R(x)^m$. First note that $R(x)=~xT'(x)$, where $T(x)$ is the generating function for unrooted trees. By~\eqref{tree-gen}, $T(x)$ satisfies 
\[		xT(x)= x^2+T(x)^3,		\]
from which it follows
\[		R(x)=xT'(x)= x\cdot \frac{2x-T(x)}{x-3(T(x))^2}.		\]
Then,
\[	[x^n]R(x)^m= [x^{n-m}]\left( \frac{2x-T(x)}{x-3T(x)^2}\right) ^m.		\]
Writing now $z=T(x)$,
{\allowdisplaybreaks
\begin{align}\label{eq: [x^n]R(x)^m}
&[x^n]R(x)^m = [x^{n-m}] \Big( 2-\frac{z}{x}\Big)^m  \Big( 1-\frac{3z^2}{x} \Big)^{-m}			\notag		\\
=&[x^{n-m}] \Bigg( \sum_{k=0}^m {m \choose k} 2^{m-k}(-1)^k \left( \frac{z}{x}\right)^k \times \sum_{j=0}^{\infty}  {-m \choose j}(-3)^j\left( \frac{z^2}{x}\right)^j 		\Bigg) 	\notag	\\
=&\sum_{k=0}^m \sum_{j=0}^{n-m} {m \choose k} {-m \choose j}2^{m-k} (-3)^j(-1)^k\, [x^{n-m+k+j}] z^{2j+k} 	\notag	 \\
=&\sum_{k=0}^m \sum_{j=0}^{n-m-1} (-1)^k{m \choose k} {m+j-1\choose j}2^{m-k} 3^j\frac{2j+k}{n-m-j} {3n-3m+k-j-1 \choose n-m-j-1}	\notag		\\
&\quad + \sum_{k=0}^m (-1)^k {m \choose k} {n-1 \choose n-m}  2^{m-k}3^{n-m},
\end{align}}
where we use 
\[	
[x^b]z^{a}=
\begin{cases}
\frac{a}{b-a}{3b-2a-1 \choose b-a-1}, & \text{ if } a<b; \\
1, & \text{ if } a=b
\end{cases}
\]
in the last equality above (cf.\  Eq.~\eqref{eq: tree coefficient}).  Combining~\eqref{eq: r(n,m)} and~\eqref{eq: [x^n]R(x)^m}, we get
\begin{equation*}
r(n,m) =  \frac{1}{m} {2n \choose m-1} \big( \Sigma_1+ \Sigma_2\big),
\end{equation*}
where
\[ 
\Sigma_1=\sum_{k=0}^m \sum_{j=0}^{n-m-1} (-1)^k{m \choose k} {m+j-1\choose j}2^{m-k} 3^j\frac{2j+k}{n-m-j} {3n-3m+k-j-1 \choose n-m-j-1} 
\]
and
\[
\Sigma_2= \sum_{k=0}^m (-1)^k {m \choose k} {n-1 \choose n-m}  2^{m-k}3^{n-m},
\]
which finishes the proof.
\end{proof}


\begin{thebibliography}{10}

\bibitem{DP}
S. Dulucq and J.-G. Penaud, \emph{Cordes, arbres et permutations}, Discrete Math.\ \textbf{117} (1993), no.\ 1--3, 89--105.

\bibitem{Kreweras}
G. Kreweras, \emph{Sur les partitions non crois\'{e}es d'un cycle}, Discrete Math.\ \textbf{1} (1972), no.\ 4, 333–-350.

\bibitem{Stanley2}
R. P. Stanley, \emph{Enumerative Combinatorics, Volume 2}, Cambridge University Press, 1999.

\end{thebibliography}
\end{document}